\theoremstyle{plain}
\newtheorem{theorem}[equation]{Theorem}
\newtheorem{corollary}[equation]{Corollary}
\newtheorem{lemma}[equation]{Lemma}
\newtheorem{proposition}[equation]{Proposition}
\theoremstyle{definition}
\newtheorem{defi}[equation]{Definition}
\newtheorem{remark}[equation]{Remark}
\newcommand{\mydia}{\hfill $\Diamond$}
\newcommand{\assPro}[1]{
	\ensuremath{\langle #1 \rangle}}
\newcommand{\ind}{\text{\large{$\mathds{1}$}}}
\newcommand{\myd}{\mbox{\upshape d}}
\long\def\symbolfootnote[#1]#2{\begingroup%
\def\thefootnote{\fnsymbol{footnote}}\footnote[#1]{#2}\endgroup}
\numberwithin{equation}{section}
\title{A Note on One-dimensional Stochastic Differential Equations with Generalized Drift\thanks{Work supported in part by the European Community's FP 7 Programme under contract PITN-GA-2008-213841, Marie Curie ITN "Controlled Systems".}}
\author{Stefan Blei \\ stefan.blei@uni-jena.de \and Hans-J\"urgen Engelbert \\ hans-juergen.engelbert@uni-jena.de}
\date{Friedrich-Schiller-Universit\"at Jena, \\ Fakult\"at f\"ur Mathematik und Informatik, \\ Institut f\"ur Stochastik, \\ D-07743 Jena, Germany \\[2ex] \today}
\begin{document}
\maketitle
\hrule 
\begin{abstract}
\noindent We consider one-dimensional stochastic differential equations with generalized drift which involve the local time $L^X$ of the solution process:
\[
	X_t = X_0 + \int_0^t b(X_s) \, \myd B_s + \int_\mathbb{R} L^X(t,y) \, \nu(\myd y)\,,
\]
where $b$ is a measurable real function, $B$ is a Wiener process and $\nu$ denotes a set function which is defined on the bounded Borel sets of the real line $\mathbb{R}$ such that it is a finite signed measure on $\mathscr{B}([-N,N])$ for every $N \in \mathbb{N}$. This kind of equation is, in dependence of using the right, the left or the symmetric local time, usually studied under the atom condition $\nu(\{x\}) < 1/2$, $\nu(\{x\}) > -1/2$ and $|\nu(\{x\})| < 1$, respectively. This condition allows to reduce an equation with generalized drift to an equation without drift and to derive conditions on existence and uniqueness of solutions from results for equations without drift. The main aim of the present note is to treat the cases $\nu(\{x\}) \geq 1/2$, $\nu(\{x\}) \leq -1/2$ and $|\nu(\{x\})| \geq 1$, respectively, for some $x \in \mathbb{R}$, and we give a complete description of the features of equations with generalized drift and their solutions in these cases.
\\[2ex]
\emph{Keywords:} Stochastic differential equations, local times, generalized drift, reflection, absorption, non-existence of solutions\\[1ex]
\emph{2010 MSC:} 60H10, 60J55 
\end{abstract}
\hrule
\section{Introduction and Basic Definitions}
Let be $b$ a measurable real function and $\nu$ a set function which is defined on the bounded Borel sets of the real line $\mathbb{R}$ such that it is a finite signed measure on $\mathscr{B}([-N,N])$ for every $N \in \mathbb{N}$. In the present note, we deal with the one-dimensional stochastic differential equation (SDE) with so-called \emph{generalized drift} introduced as
\begin{equation}\label{eqn:SDE_mvd}
	X_t = X_0 + \int_0^t b(X_s) \, \myd B_s + \int_\mathbb{R} L^X(t,y) \, \nu(\myd y)\,,
\end{equation}
where $B$ is a Wiener process and $L^X$ denotes either the right, the left or the symmetric local time of the unknown process $X$. We call $\nu$ appearing in Eq. (\ref{eqn:SDE_mvd}) \emph{drift measure.} \\
\indent SDEs of type (\ref{eqn:SDE_mvd}) with generalized drift have been studied previously by many authors. We refer the reader to Harrison and Shepp \cite{harrison_shepp}, N.I. Portenko \cite{portenko}, D.W. Stroock and M. Yor \cite{stroock_yor} and J.F. Le Gall \cite{LeGall_1983}, \cite{LeGall_1984}. H.J. Engelbert and W. Schmidt \cite{engelbert_schmidt:1985}, \cite{engelbert_schmidt:1989_III} derived rather weak necessary and sufficient conditions on existence and uniqueness of solutions to SDEs with generalized drift. More recently, R.F. Bass and Z.-Q. Chen \cite{bass_chen} also considered SDEs of type (\ref{eqn:SDE_mvd}). \\
\indent To treat the general equation (\ref{eqn:SDE_mvd}), in case of considering the right (resp., left, symmetric) local time the additional assumption 
\begin{equation}\label{eqn:cond_atoms}
	\nu(\{x\}) < \frac{1}{2} \quad (\text{resp., } \nu(\{x\}) > -\frac{1}{2}, \ |\nu(\{x\})| < 1), \qquad x \in \mathbb{R}\,,
\end{equation}
is posed on $\nu$ (cf. \cite{engelbert_schmidt:1985},\cite{engelbert_schmidt:1989_III},\cite{LeGall_1984},\cite{stroock_yor}). This condition allows to reduce Eq. (\ref{eqn:SDE_mvd}) to an equation without drift, i.e., an equation of type (\ref{eqn:SDE_mvd}) where the drift measure is the zero measure. Therefore, well-known conditions on existence and uniqueness of solutions to equations without drift can be used to derive conditions on existence and uniqueness of solutions to Eq. (\ref{eqn:SDE_mvd}) (see e.g. \cite{engelbert_schmidt:1985},\cite{engelbert_schmidt:1989_III}). More precisely, under condition (\ref{eqn:cond_atoms}) the integral equation
\begin{equation}\label{eqn:integral_eqn}
	g(x) = \left\{
					\begin{array}{ll}
						\displaystyle 1 - 2 \int_{[0,x]} F(g,y) \, \nu(\myd y), & x \geq 0, \medskip \\
						\displaystyle 1 + 2 \int_{(x,0)} F(g,y) \, \nu(\myd y), & x \geq 0,
					\end{array}
				 \right.
\end{equation}
where
\[
	F(g,x) = g(x-) \quad(\text{resp., } g(x), \ \left(g(x) + g(x-)\right)/2), \qquad x \in \mathbb{R},
\]
admits a unique c\`{a}dl\`{a}g solution. This solution $g$ is strictly positive and the strictly increasing and continuous primitive $G(x) = \int_0^x g(y)\, \myd y$ transforms Eq. (\ref{eqn:SDE_mvd}) into an equation without drift (cf. \cite{engelbert_schmidt:1985}, Proposition 1, or \cite{engelbert_schmidt:1989_III}, Proposition (4.29)). In the several cases, the explicit form of the solution to (\ref{eqn:integral_eqn}) can be found in \cite{engelbert_schmidt:1989_III}, (4.26), (4.26$^\prime$) and (4.26$^{\prime\prime}$), respectively. \\
\indent The case $\nu(\{x\}) = 1/2$ (resp., $\nu(\{x\}) = -1/2$, $|\nu(\{x\})| = 1$) for some $x \in \mathbb{R}$ is excluded in (\ref{eqn:integral_eqn}) since it corresponds, as we will see, to a reflecting barrier at the point $x$, which requires different methods to treat Eq. (\ref{eqn:SDE_mvd}) than by assuming (\ref{eqn:cond_atoms}) (cf. W. Schmidt \cite{schmidt:1989}, R.F. Bass and Z.-Q. Chen \cite{bass_chen}). Moreover, in the case that $\nu(\{x\}) > 1/2$ (resp., $\nu(\{x\}) < -1/2$, $|\nu(\{x\})| > 1$) holds for some $x \in \mathbb{R}$, in general, there is no solution to Eq. (\ref{eqn:SDE_mvd}). \\
\indent Indeed, in their famous paper on skew Brownian motion, J.M. Harrison and L.A. Shepp \cite{harrison_shepp} studied Eq. (\ref{eqn:SDE_mvd}) with symmetric local time in the special case $X_0 = 0$, $b \equiv 1$ and $\nu = \beta \, \delta_0$, where $\delta_0$ is the Dirac measure in zero, and they proved that there is no solution for the case $|\beta|>1$. \\
\indent Referring to \cite{harrison_shepp}, J.F. Le Gall \cite{LeGall_1984} (see after the proof of \cite{LeGall_1984}, Theorem 2.3) asserted, without giving a proof, that in his context ($b$ is of finite variation and bounded from below by a strictly positive constant) the result on the non-existence of a solution started at $x_0$ can be extended to Eq. (\ref{eqn:SDE_mvd}) with symmetric local time if $|\nu(\{x_0\})| > 1$. \\
\indent Also for symmetric local time, R.F. Bass and Z.-Q. Chen \cite{bass_chen} stated some propositions if $|\nu(\{x\})| > 1$ or $|\nu(\{x\})| = 1$ for some $x \in \mathbb{R}$ under the assumption, besides others, that the diffusion coefficient $b$ is bounded below by a positive constant (see \cite{bass_chen}, Theorem 3.2 and 3.3). However, in both formulations and proofs, we feel that there is not enough clarity which had enabled us to follow their arguments. In particular, they have not pointed out where their assumptions on $b$ are used. But we shall see below that for general diffusion coefficient $b$ their Theorem 3.2 does not remain true. \\
\indent The purpose of the present note is to give a general, complete and rigorous approach to Eq. (\ref{eqn:SDE_mvd}) under the condition that, contrary to (\ref{eqn:cond_atoms}), for some $x \in \mathbb{R}$ the drift measure $\nu$ satisfies
\[
	\nu(\{x\}) > 1/2  \quad (\text{resp., } \nu(\{x\}) < -1/2, \ |\nu(\{x\})| > 1)
\]
or
\[
  \nu(\{x\}) = 1/2 \quad (\text{resp., } \nu(\{x\}) = -1/2, \ |\nu(\{x\})| = 1).
\]
The basic idea is to provide an insight into the behaviour of the local times $L^X(t,x)$ of solutions $X$ of Eq. (\ref{eqn:SDE_mvd}) in such points $x \in \mathbb{R}$ violating (\ref{eqn:cond_atoms}). This gives rise for an application of Tanaka's formula, followed by a space transformation, to conclude full information about the features of the solution. \\
\indent Throughout the paper, $(\Omega, \mathcal{F}, \mathbf{P})$ stands for a complete probability space endowed with a filtration $\mathbb{F} = (\mathcal{F}_t)_{t \geq 0}$ which satisfies the usual conditions, i.e., $\mathbb{F}$ is right-continuous and $\mathcal{F}_0$ contains all sets from $\mathcal{F}$ which have $\mathbf{P}$-measure zero. For a process $X = (X_t)_{t \geq 0}$ the notation $(X,\mathbb{F})$ indicates that $X$ is $\mathbb{F}$-adapted. The processes considered in the following belong to the class of continuous semimartingales up to a stopping time $S$ and local times of such processes will be an important tool. Given an $\mathbb{F}$-stopping time $S$, we say that $(X,\mathbb{F})$ is a \emph{semimartingale up to $S$} if there exists an increasing sequence $(S_n)_{n \in \mathbb{N}}$ of $\mathbb{F}$-stopping times such that $S = \lim_{n\rightarrow +\infty} S_n$ and the process $(X^{S_n},\mathbb{F})$ obtained by stopping $(X,\mathbb{F})$ in $S_n$ is a real-valued semimartingale for every $n\in\mathbb{N}$. Analogously, we introduce the notion of a \emph{local martingale up to $S$}. \\
\indent If $(X,\mathbb{F})$ is a semimartingale up to $S$, then we can find a decomposition
\begin{equation}\label{eqn:semi_decomposition}
	X_t = X_0 + M_t + V_t\,, \qquad t < S, \ \mathbf{P}\text{-a.s.},
\end{equation}
on $[0,S)$, where $(M,\mathbb{F})$ is a local martingale up to $S$ with $M_0 = 0$ and $(V, \mathbb{F})$ is a right-continuous process whose paths are of bounded variation on $[0,t]$ for every $t < S$ and with $V_0=0$. If $X$ is continuous on $[0,S)$, then there exists a decomposition such that $M$ and $V$ are continuous on $[0,S)$ and this decomposition is unique on $[0,S)$. For any continuous local martingale $(M, \mathbb{F})$ up to $S$ by $\assPro{M}$ we denote the continuous increasing process, which is uniquely determined on $[0,S)$, such that $(M^2 - \assPro{M}, \mathbb{F})$ is a continuous local martingale up to $S$ and $\assPro{M}_0 = 0$. For a continuous semimartingale $(X,\mathbb{F})$ up to $S$ we set $\assPro{X} = \assPro{M}$, where $M$ is the continuous local martingale up to $S$ in the decomposition (\ref{eqn:semi_decomposition}) of $X$. \\
\indent We now recall some facts which are well-known for continuous semimartingales (see for example \cite{revuzyor}, Ch. VI, \S{}1). Their extension to semimartingales $(X,\mathbb{F})$ up to an $\mathbb{F}$-stopping time $S$ is obvious. For $(X,\mathbb{F})$ there exists the right (resp., left, symmetric) local time $L^X$ up to $S$ which is a function on $[0,S) \times \mathbb{R}$ into $[0,+\infty)$ such that for every real function $f$ which is the difference of convex functions the \emph{generalized It\^o formula} holds:
\begin{equation}\label{eqn:gen_ito_formula}
	f(X_t) = f(X_0) + \int_0^t f^\prime (X_s) \, \myd X_s + \frac{1}{2} \int_0^t L^X(t,y) \, \myd f^\prime(y)\,, \qquad t < S, \ \mathbf{P}\text{-a.s.} 
\end{equation}
where $f^\prime$ denotes the left (resp., right, symmetric) derivative of $f$. To indicate explicitly which local time we consider, we write $L_+^X$ (resp., $L_-^X$, $\hat{L}^X$) for the right (resp., left, symmetric) local time of $X$. If a formula or statement holds for every type of local time we just use the symbol $L^X$. Note that we can choose $L_+^X$ (resp., $L_-^X$) to be increasing and continuous in $t < S$ and right (resp. left) continuous with left (resp. right) hand limits in $x$. Moreover, we have the relation $\hat{L}^X = (L_+^X + L_-^X)/2$. \\
\indent The local times fulfil
\begin{equation}\label{eqn:int_wrt_loc_time}
	\int_0^t \ind_{\{y\}}(X_s) \, L^X(\myd s,y) = L^X(t,y)\,, \qquad t < S, \ y \in \mathbb{R}, \ \mathbf{P}\text{-a.s.},
\end{equation}
\begin{equation}\label{eqn:loc_time_and_variation_process}
	L_+^X(t,y) - L_-^X(t,y) = 2 \int_0^t \ind_{\{y\}}(X_s) \, \myd V_s\,, \qquad t < S, \, y \in \mathbb{R}, \ \mathbf{P}\text{-a.s.},
\end{equation}
\begin{equation}\label{eqn:loc_time_zero_outside_compact_interval}
	L^X(t,y) = 0, \qquad t < S, \, y \notin \left[\min_{0 \leq s \leq t} X_s, \max_{0 \leq s \leq t} X_s\right], \ \mathbf{P}\text{-a.s.}
\end{equation}
and
\begin{equation}\label{eqn:loc_time_as_limit}
	L^X(t,y) = \lim_{\varepsilon \downarrow 0}\frac{1}{\varepsilon} \int_0^t I^y_\varepsilon(X_s) \, \myd \assPro{X}_s, 
	\qquad t < S, \ y \in \mathbb{R}, \ \mathbf{P}\text{-a.s.},
\end{equation}
where
\[
	I^y_\varepsilon (x) = \ind_{[y,y+\varepsilon)}(x) \quad (\text{resp.,} \ \ind_{(y-\varepsilon, y]}(x), \ 
	\frac{1}{2} \, \ind_{(y-\varepsilon, y+\varepsilon)}(x)), \qquad x \in \mathbb{R}.
\]

In general, SDEs of type (\ref{eqn:SDE_mvd}) admit exploding solutions. Therefore, the convenient state space is the extended real line $\overline{\mathbb{R}} = \mathbb{R}\cup \{-\infty, +\infty\}$ equipped with the $\sigma$-algebra $\mathscr{B}(\overline{\mathbb{R}})$ of Borel subsets. We fix the notion of a solution to Eq. (\ref{eqn:SDE_mvd}) in the following
\begin{defi}\label{def:solution}
	A continuous $(\overline{\mathbb{R}}, \mathscr{B}(\overline{\mathbb{R}}))$-valued stochastic process $(X,\mathbb{F})$ 
	defined on a probability space $(\Omega,\mathcal{F},\mathbf{P})$ is called a solution to Eq. (\ref{eqn:SDE_mvd}) if the following 
	conditions are fulfilled:\medskip
	
	(i) $X_0$ is real-valued. \medskip
	
	(ii)  $X_t = X_{t \wedge S_\infty^X}$, $t \geq 0$,	$\mathbf{P}$-a.s., where $S_\infty^X := \inf\{t \geq 0:|X_t| = +\infty\}$. \medskip
	
	(iii) $(X,\mathbb{F})$ is a semimartingale up to $S_\infty^X$.\medskip
	
	(v)	 There exists a Wiener process $(B,\mathbb{F})$ such that Eq. (\ref{eqn:SDE_mvd}) is satisfied for all $t < S_\infty^X$ $\mathbf{P}$-a.s. 
\end{defi}
\section{The Results}
We recall that in Eq. (\ref{eqn:SDE_mvd}) $L^X$ stands either for the right, the left or the symmetric local time and we treat these three cases simultaneously in the sequel. For the sake of brevity, in the following we write for example $\{\nu \geq 1/2\}$ instead of $\{y \in \mathbb{R}: \nu(\{y\}) \geq 1/2\}$.
\begin{lemma}\label{lemma:loc_time_zero}
	Let $(X,\mathbb{F})$ be a solution of Eq. (\ref{eqn:SDE_mvd}) with right (resp., left, symmetric) local time. Then it holds
	\[
		\begin{split}
													 L_-^X(t,x) = 0, \qquad & t < S_\infty^X, \ x \in \{\nu \geq 1/2\}, \ \mathbf{P}\text{-a.s.} \phantom{\Bigr)}\\
		  \Bigl(\text{resp., } L_-^X(t,x) = 0, \qquad & t < S_\infty^X, \ x \in \{\nu < -1/2\},  \ \mathbf{P}\text{-a.s.}, \\
		  										 L_-^X(t,x) = 0, \qquad & t < S_\infty^X, \ x \in\{|\nu| > 1 \text{ or } \nu = 1\}, \ \mathbf{P}\text{-a.s.}\Bigr)
		\end{split}
	\]
	and
	\[
		\begin{split}
													 L_+^X(t,x) = 0, \qquad & t < S_\infty^X, \ x \in \{\nu > 1/2\}, \ \mathbf{P}\text{-a.s.} \phantom{\Bigr)}\\
			\Bigl(\text{resp., } L_+^X(t,x) = 0, \qquad & t < S_\infty^X, \ x \in \{\nu \leq -1/2\},  \ \mathbf{P}\text{-a.s.}, \\
													 L_+^X(t,x) = 0, \qquad & t < S_\infty^X, \ x \in \{|\nu| > 1 \text{ or } \nu = -1\}, \ \mathbf{P}\text{-a.s.} \Bigr)
		\end{split}
	\]
\end{lemma}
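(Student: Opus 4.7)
My plan is to derive the pointwise identity
\[
L_+^X(t,x) - L_-^X(t,x) = 2\,\nu(\{x\})\,L^X(t,x),\qquad t < S_\infty^X,\ x \in \mathbb{R},\ \mathbf{P}\text{-a.s.},
\]
and then to read off each of the six assertions from the non-negativity of $L_\pm^X$ by substituting the three possible choices $L^X \in \{L_+^X,\ L_-^X,\ \hat L^X\}$.

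To establish the identity, I first observe that by the SDE the canonical decomposition (\ref{eqn:semi_decomposition}) of $X$ on $[0,S_\infty^X)$ reads $X_t = X_0 + M_t + V_t$ with $M_t = \int_0^t b(X_s)\,\myd B_s$ a continuous local martingale up to $S_\infty^X$ and $V_t = \int_\mathbb{R} L^X(t,y)\,\nu(\myd y)$; by (\ref{eqn:loc_time_zero_outside_compact_interval}) the spatial integration is confined to a random compact set on which $\nu$ has finite total variation, so $V$ is continuous and of locally bounded variation. Plugging this $V$ into (\ref{eqn:loc_time_and_variation_process}) and applying Fubini for Lebesgue--Stieltjes integrals yields
\[
\int_0^t \ind_{\{x\}}(X_s)\,\myd V_s = \int_\mathbb{R}\!\left(\int_0^t \ind_{\{x\}}(X_s)\,L^X(\myd s,z)\right)\nu(\myd z).
\]
By (\ref{eqn:int_wrt_loc_time}) the inner integral equals $L^X(t,x)$ when $z = x$, and vanishes when $z \neq x$ since the measure $L^X(\myd s,z)$ is carried by $\{s:X_s = z\}$, on which $\ind_{\{x\}}(X_s)=0$. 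Hence the double integral collapses to $\nu(\{x\})\,L^X(t,x)$, which together with (\ref{eqn:loc_time_and_variation_process}) gives the master identity.

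With the identity in hand each case is a short algebraic check. Taking $L^X = L_+^X$ rearranges it to $L_-^X(t,x) = L_+^X(t,x)\bigl(1 - 2\nu(\{x\})\bigr)$: at $\nu(\{x\}) = 1/2$ this directly forces $L_-^X = 0$, whereas $\nu(\{x\}) > 1/2$ makes the right side non-positive, so non-negativity of $L_\pm^X$ forces both to vanish. The left-local-time case is symmetric. For the symmetric local time the identity becomes $L_+^X(1 - \nu(\{x\})) = L_-^X(1 + \nu(\{x\}))$, which immediately yields $L_-^X = 0$ at $\nu(\{x\}) = 1$, $L_+^X = 0$ at $\nu(\{x\}) = -1$, and both vanishing whenever $|\nu(\{x\})| > 1$ by matching signs on the two sides. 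The only genuine technical nuisance is justifying the Fubini exchange; I plan to handle it by localizing at $T_n = \inf\{t:|X_t|\geq n\}\wedge S_n$, where $(S_n)$ reduces $X$ to a semimartingale, so that the spatial integration is confined to the fixed compact $[-n,n]$ on which $\nu$ has finite total variation, reducing the exchange to the classical Fubini theorem applied to the Jordan decomposition $\nu = \nu_+ - \nu_-$ against the non-negative Radon measures $L^X(\myd s,\cdot)$.
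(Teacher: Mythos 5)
Your proposal is correct and follows essentially the same route as the paper: both derive the master identity $L_+^X(t,x)-L_-^X(t,x)=2\,\nu(\{x\})\,L^X(t,x)$ from (\ref{eqn:int_wrt_loc_time}) and (\ref{eqn:loc_time_and_variation_process}) and then read off all six assertions from the non-negativity of the local times. The only difference is that you spell out the Fubini/localization justification that the paper leaves implicit, which is a welcome but inessential refinement.
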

\begin{proof}
Using (\ref{eqn:int_wrt_loc_time}) and (\ref{eqn:loc_time_and_variation_process}), we see
\[\begin{split}
	L_+^X(t,x)-L_-^X(t,x) &= 2 \int_0^t \ind_{\{x\}} (X_s) \int_\mathbb{R} L^X(\myd s, y) \, \nu(\myd y) \\
												&= 2 \, L^X(t, x) \, \nu(\{x\}), \qquad t < S_\infty^X, \ x \in \mathbb{R}, \ \mathbf{P}\text{-a.s.}
\end{split}\]
and it follows
\[
	\begin{split}
		\bigl(1-2\nu(\{x\})\bigr) \, L_+^X(t,x) &= L_-^X(t,x), \qquad t < S_\infty^X, \ x \in \mathbb{R}, \ \mathbf{P}\text{-a.s.} \phantom{\Bigr)}\\
		\Bigl(\text{resp., }
		\bigl(1+2\nu(\{x\})\bigr) \, L_-^X(t,x) &= L_+^X(t,x), \qquad t < S_\infty^X, \ x \in \mathbb{R}, \ \mathbf{P}\text{-a.s.}, \\
		 \bigl(1-\nu(\{x\})\bigr) \, L_+^X(t,x)  &= \bigl(1+\nu(\{x\})\bigr)\,L_-^X(t,x), \qquad t < S_\infty^X, \ x \in \mathbb{R}, \ \mathbf{P}\text{-a.s.} \Bigr)\\
	\end{split}
\]
which, together with the non-negativity of the local times, implies the claims.
\end{proof}
The following theorem is the main result of the present note.
\begin{theorem}\label{theorem:reflection_and_absorbing}
	Let $(X,\mathbb{F})$ be a solution of Eq. (\ref{eqn:SDE_mvd}) with right (resp., left, symmetric) local time started at $x_0 \in \mathbb{R}$. Then 
	the following statements are satisfied: \medskip
	
	(i)  Assume $\nu(\{x_0\}) \geq 1/2$ (resp., $\nu(\{x_0\}) \leq -1/2$, $|\nu(\{x_0\})| \geq 1$). Then it holds
			 \[
			 	 \begin{split}
													  	&X_t \geq x_0, \ t \geq 0, \ \mathbf{P}\text{-a.s.} \phantom{\Bigr)}\\
		  	 \Bigl(\text{resp., } &X_t \leq x_0,  \ t \geq 0, \ \mathbf{P}\text{-a.s.}, \\
		  										  	&X_t \leq x_0,  \ t \geq 0, \ \mathbf{P}\text{-a.s.} \text{ if } \nu(\{x_0\}) \leq -1 \text{ and }
		  										  	 X_t \geq x_0,  \ t \geq 0, \ \mathbf{P}\text{-a.s.} \text{ if } \nu(\{x_0\}) \geq 1 \Bigr)
				 \end{split}
			 \]
			 i.e., the point $x_0$ is reflecting. \medskip
			 
	(ii) Assume $\nu(\{x_0\}) > 1/2$ (resp., $\nu(\{x_0\}) < -1/2$, $|\nu(\{x_0\})| > 1$). Then it holds 
			 \[
			 	X_t = x_0, \qquad t \geq 0, \ \mathbf{P}\text{-a.s.}, 
			 \]
			 i.e., the point $x_0$ is absorbing. In particular, $b(x_0) = 0$ must be fulfilled.
\end{theorem}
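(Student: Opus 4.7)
\emph{Strategy.} We treat the right local time case, the left and symmetric ones being analogous after the obvious sign changes. The plan is to combine Tanaka's formula at $x_0$ with the standard space transformation of~\cite{engelbert_schmidt:1985} on a one-sided neighborhood of $x_0$, packaged into a single application of the generalized It\^o formula~\eqref{eqn:gen_ito_formula} to a scale function having a corner at $x_0$. Since $\nu$ is finite on bounded sets, only finitely many atoms of mass $\geq 1/2$ lie in any bounded interval, so we may fix $\delta > 0$ such that $\nu$ has no atom of mass $\geq 1/2$ in $(x_0 - \delta, x_0) \cup (x_0, x_0 + \delta)$. On each of these half-neighborhoods one solves the analog of~(\ref{eqn:integral_eqn}) with base point $x_0$ and the atom of $\nu$ at $x_0$ excluded from the integration range, obtaining strictly positive left-continuous functions $g_\pm$ satisfying $\myd g_\pm(y) = \mp 2\, g_\pm(y-)\, \nu(\myd y)$.

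\emph{Part (i).} Let $G_-(x) = -\int_x^{x_0} g_-(y)\, \myd y$ for $x \leq x_0$ and $G_-(x) = 0$ for $x \geq x_0$; then $G_-$ is a non-positive continuous difference of convex functions, strictly negative on $[x_0 - \delta, x_0)$, with a corner at $x_0$ of left derivative $g_-(x_0) = 1$ and right derivative $0$. Stop at $T = \inf\{t : X_t \leq x_0 - \delta\} \wedge S_\infty^X$ and apply~\eqref{eqn:gen_ito_formula} to $G_-(X_{\cdot \wedge T})$. Since $\myd g_-(y) = -2 g_-(y-)\, \nu(\myd y)$ on $(x_0-\delta,x_0)$, the smooth part of $\int L_+^X(t, y)\, \myd G_-'(y)$ cancels exactly the matching restriction of the $\nu$-drift in $\int G_-'(X_s)\, \myd V_s$; the $x_0$-atom contributions of these two integrals combine with the $\tfrac{1}{2}$ in front of the corner term to leave the residual $g_-(x_0)(\nu(\{x_0\}) - \tfrac{1}{2}) L_+^X(t \wedge T, x_0)$. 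This vanishes in both subcases: for $\nu(\{x_0\}) = 1/2$ the prefactor is zero, while for $\nu(\{x_0\}) > 1/2$ Lemma~\ref{lemma:loc_time_zero} gives $L_+^X(\cdot, x_0) \equiv 0$. Hence $G_-(X_{\cdot \wedge T})$ is a non-positive continuous local martingale starting at $0$, therefore identically zero, and the strict negativity of $G_-$ on $[x_0 - \delta, x_0)$ forces $X_{t \wedge T} \geq x_0$; consequently $T = \infty$ and $X \geq x_0$ almost surely.

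\emph{Part (ii).} Assume now $\nu(\{x_0\}) > 1/2$. Then Lemma~\ref{lemma:loc_time_zero} additionally yields $L_+^X(\cdot, x_0) \equiv 0$, and by part (i) we have $X \geq x_0$. Define $G_+(x) = \int_{x_0}^x g_+(y)\, \myd y$ for $x \geq x_0$ and $G_+(x) = 0$ for $x \leq x_0$, and set $T' = \inf\{t : X_t \geq x_0 + \delta\} \wedge S_\infty^X$. Because the left derivative of $G_+$ at $x_0$ is now $0$, the $\nu(\{x_0\})$-contribution to $\int G_+'(X_s)\, \myd V_s$ drops out, and the analogous It\^o computation leaves the simpler residual $\tfrac{1}{2}\, L_+^X(t \wedge T', x_0)$, which vanishes by the above. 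Thus $G_+(X_{\cdot \wedge T'})$ is a non-negative continuous local martingale starting at $0$, hence identically zero; since $G_+$ is strictly positive on $(x_0, x_0 + \delta]$, this forces $X_{t \wedge T'} = x_0$, so $T' = \infty$ and $X \equiv x_0$. Plugging $X \equiv x_0$ into~\eqref{eqn:SDE_mvd} yields $0 = b(x_0) B_t$, whose quadratic variation $b(x_0)^2\, t$ gives $b(x_0) = 0$.

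\emph{Main obstacle.} The technical heart is the generalized It\^o computation itself: one has to unwind $\int L_+^X(t, y)\, \myd G_\pm'(y)$ simultaneously with $\int G_\pm'(X_s)\, \myd V_s$, verify that the smooth contributions off $x_0$ cancel precisely by virtue of the defining equation of $g_\pm$, and carefully collect the mass concentrated at the corner $x_0$. The resulting residual is what distinguishes the two parts of the theorem: it is proportional to $\nu(\{x_0\}) - \tfrac{1}{2}$ for $G_-$ -- which already dies at the reflection threshold $\nu(\{x_0\}) = 1/2$ -- but it is a bare $\tfrac{1}{2}\, L_+^X(\cdot, x_0)$ for $G_+$, which only dies under the strict inequality that additionally delivers $L_+^X(\cdot, x_0) \equiv 0$ through Lemma~\ref{lemma:loc_time_zero}.
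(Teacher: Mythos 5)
Your argument is correct and is essentially the paper's own proof in a slightly different packaging: the paper first applies Tanaka's formula to $X\wedge x_0$ and then transforms by the primitive $G$ of the solution of (\ref{eqn:integral_eqn}), whereas you fold these two steps into a single application of the generalized It\^o formula to the corner function $G_\pm$ (and localize to a $\delta$-neighbourhood free of large atoms instead of splitting off the exceptional sets $A_1,A_2$ globally), while the key ingredients --- Lemma~\ref{lemma:loc_time_zero} to annihilate the relevant local time at $x_0$, the cancellation coming from $\myd g=-2\,g(\cdot-)\,\nu(\myd y)$, and the fact that a one-signed continuous local martingale started at $0$ vanishes identically --- are the same. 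The residuals you isolate at the corner, $g_-(x_0-)\bigl(\nu(\{x_0\})-\tfrac12\bigr)L_+^X(\cdot,x_0)$ for part (i) and $\tfrac12\,L_+^X(\cdot,x_0)$ for part (ii), are both correct, so the proposal goes through.
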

\begin{proof}
\indent \textbf{1)} Let $(X,\mathbb{F})$ be a solution of Eq. (\ref{eqn:SDE_mvd}) started at $x_0 \in \mathbb{R}$. At first we reduce the problem to the case $x_0 = 0$. For the process $(X - x_0, \mathbb{F})$ it clearly holds
\[
	\begin{split}
		X_t -x_0 &= \int_0^t b_{x_0} (X_s - x_0) \, \myd B_s 
									 + \int_\mathbb{R} L^{X-x_0}(t,y-x_0) \, \nu(\myd y), \qquad t < S_\infty^X, \ \mathbf{P}\text{-a.s.},
	\end{split}
\]
where $b_{x_0}(x) := b(x+x_0)$, $x \in \mathbb{R}$, and we have used the relation $L^X(t,x) = L^{X-x_0}(t,x-x_0)$, $t<S_\infty^X$, $x \in \mathbb{R}$, $\mathbf{P}$-a.s. which can be easily deduced for example by exploiting (\ref{eqn:loc_time_as_limit}). Introducing the drift measure $\nu_{x_0}$ via $\nu_{x_0}(B) := \nu(B+x_0)$,\footnote{For $B \subseteq \mathbb{R}$, we set $B+x_0 := \{x+x_0 :\; x \in B\}$.} $B\in\mathscr{B}([-N,N])$, $N \in \mathbb{N}$, we obtain 
\[
	\begin{split}
		X_t -x_0 &= \int_0^t b_{x_0} (X_s - x_0) \, \myd B_s 
									 + \int_\mathbb{R} L^{X-x_0}(t,y) \, \nu_{x_0}(\myd y), \qquad t < S_\infty^X, \ \mathbf{P}\text{-a.s.}
	\end{split}
\]
Hence, $(X-x_0,\mathbb{F})$ is also a solution to an equation of type (\ref{eqn:SDE_mvd}) but started at zero and the drift measure satisfies $\nu_{x_0}(\{0\}) = \nu(\{x_0\})$. Therefore, without loss of generality we assume $x_0 = 0$ in the following parts of the proof. \\
\indent \textbf{2)} Now let us consider the case of the right local time in Eq. (\ref{eqn:SDE_mvd}). To prove (i) we assume $\nu(\{0\}) \geq 1/2$, we set $Z_t = X_t \wedge 0$, $t \geq 0$. We apply Tanaka's formula (see (\ref{eqn:gen_ito_formula}) for $f(x) = -x^- = x \wedge 0$, $x \in \mathbb{R}$) for the left local time to obtain
\[
	Z_t = \int_0^t \ind_{(-\infty,0)}(X_s) \, b(X_s) \, \myd B_s 
									 + \int_0^t \ind_{(-\infty,0)}(X_s) \int_\mathbb{R} L_+^X(\myd s,y) \, \nu(\myd y) - \frac{1}{2} L_-^X(t,0), 
\]
$t < S_\infty^X, \ \mathbf{P}\text{-a.s.}$ Lemma \ref{lemma:loc_time_zero} shows that the left local time of $X$ in zero vanishes and we can write
\[\begin{split}
	Z_t &= \int_0^t \ind_{(-\infty,0)}(Z_s) \, b(Z_s) \, \myd B_s 
										+ \int_\mathbb{R} \int_0^t \ind_{(-\infty,0)}(X_s) \, L_+^X(\myd s,y) \, \nu(\myd y) \\
								 &= \int_0^t \ind_{(-\infty,0)}(Z_s) \, b(Z_s) \, \myd B_s 
										+ \int_\mathbb{R} L_+^Z(t,y)\, \ind_{(-\infty,0)}(y)  \, \nu(\myd y), \qquad t < S_\infty^X, \ \mathbf{P}\text{-a.s.},
\end{split}\]
where we used (\ref{eqn:int_wrt_loc_time}) and the easy to check relation $L_+^X(t,y) = L_+^Z(t,y)$, $t < S_\infty^X$, $y < 0$, $\mathbf{P}$-a.s. (use e.g. (\ref{eqn:loc_time_as_limit})). Introducing the sets
\[
	A_1 := \{y \in (-\infty,0) : \nu(\{y\}) \geq 1/2\}, \quad A_2 := \{y \in (-\infty,0) : \nu(\{y\}) = 1/2\}
\]
and the set function $\mu(\myd y) := \ind_{(-\infty,0) \setminus A_1} (y) \, \nu (\myd y)$, in the decomposition of $Z$ we can split the last integral and, using Lemma \ref{lemma:loc_time_zero}, we can write
\[
	Z_t = \int_0^t \ind_{(-\infty,0)}(Z_s) \, b(Z_s) \, \myd B_s 
										+ \int_\mathbb{R} L_+^{Z}(t,y) \, \mu(\myd y)
										+ \int_\mathbb{R} L_+^{Z}(t,y) \, \ind_{A_2}(y)  \, \nu(\myd y),
\]
$t < S_\infty^X$, $\mathbf{P}$-a.s. Noting that $\mu(\{x\}) < 1/2$, $x \in \mathbb{R}$, we define the strictly positive function $g$ as the unique solution of the integral equation (\ref{eqn:integral_eqn}) with respect to $\mu$ and denote by $G(x) = \int_0^x g(y)\, \myd y$, $x \in \overline{\mathbb{R}}$, its strictly increasing and continuous primitive. Then, with the notation 
\[
	M_t := \int_0^t \ind_{(-\infty,0)}(Z_s) \, b(Z_s) \, \myd B_s, \qquad t < S_\infty^X\,,
\]
and noting that $G$ restricted to $\mathbb{R}$ is the difference of convex functions, due to the generalized It\^o formula for the right local time and since $\myd g (y) = -2\, g(y-) \, \mu(\myd y)$, for $Y = G(Z)$ it holds
\begin{equation}\label{eqn:spacetrans_good_points}
 \begin{split}
	Y_t	&= \int_0^t g(Z_s) \, \myd M_s 
							 + \int_0^t g(Z_s-) \int_\mathbb{R} L_+^{Z}(\myd s,y) \, \mu(\myd y) \\
						&\phantom{=======}+ \int_0^t g(Z_s-) \int_\mathbb{R} L_+^{Z}(\myd s,y) \, \ind_{A_2}(y)  \, \nu(\myd y)
							 - \int_\mathbb{R} L_+^{Z}(t,y) \, g(y-) \, \mu(\myd y)\\
						&= \int_0^t g(Z_s) \, \myd M_s  
								+  \int_\mathbb{R} L_+^{Z}(t,y) \, g(y-) \, \ind_{A_2}(y)  \, \nu(\myd y),
 \end{split}
\end{equation}
$t < S_\infty^X$, $\mathbf{P}$-a.s. Clearly, since we have $Z_t \leq 0$, $t \geq 0$, and since $G$ maps $(-\infty,0]$ into $(-\infty,0]$, it follows $Y_t \leq 0$, $t \geq 0$. Using that $g$ is strictly positive, we conclude
\begin{equation}\label{eqn:aux_2}
	\int_0^t g(Z_s) \, \myd M_s	\leq Y_t \leq 0, \qquad t < S_\infty^X\, \ \mathbf{P}\text{-a.s.}
\end{equation}
The process $\int_0^\cdot g(Z_s) \, \myd M_s$ being a non-positive continuous local martingale up to $S_\infty^X$ starting at zero must be zero $\mathbf{P}$-a.s. which implies $\int_0^t g^2(Z_s)\, \myd \assPro{M}_s = 0$, $t < S_\infty^X$, $\mathbf{P}$-a.s., and thus $M_t = 0$, $t < S_\infty^X$, $\mathbf{P}$-a.s. Hence, $Z$ is a process of locally bounded variation on $[0,S_\infty^X)$ and we obtain 
\[
	L_\pm^{Z}(t,y) = 0, \qquad t < S_\infty^X, \ y \in \mathbb{R}, \ \mathbf{P}\text{-a.s.}
\]
Therefore, it holds $Z_t = 0$, $t < S_\infty^X$, $\mathbf{P}$-a.s., which means $X_t \geq 0$, $t \geq 0$, $\mathbf{P}$-a.s. and (i) is proven. \\
\indent \textbf{3)} Now, still considering the case of the right local time in Eq. (\ref{eqn:SDE_mvd}), we show (ii) which is why we assume $\nu(\{0\}) > 1/2$. From (i) we derive $X_t \geq 0$, $t \geq 0$, $\mathbf{P}$-a.s., and hence via (\ref{eqn:loc_time_zero_outside_compact_interval})
\[
	X_t	= \int_0^t b(X_s) \, \myd B_s + \int_\mathbb{R} \ind_{[0,+\infty)}(y) \, L_+^X(t,y) \, \nu(\myd y), 
	\qquad t < S_\infty^X, \ \mathbf{P}\text{-a.s.}
\]
Setting
\[
	A_1 := \{y \in [0,+\infty) : \nu(\{y\}) \geq 1/2\}, \quad A_2 := \{y \in [0,+\infty) : \nu(\{y\}) = 1/2\}
\]
and defining the set function $\mu(\myd y) := \ind_{[0,+\infty) \setminus A_1} (y) \, \nu (\myd y)$, we can write
\[
	X_t	= \int_0^t  b(X_s) \, \myd B_s + \int_\mathbb{R} L_+^X(t,y) \, \mu(\myd y) 
					+ \int_\mathbb{R} \ind_{A_2}(y) \, L_+^X(t,y) \, \nu(\myd y), 
\]
$t < S_\infty^X, \ \mathbf{P}\text{-a.s.}$, since the right local time $L_+^X$ of $X$ vanishes on $A_1 \setminus A_2$ by Lemma \ref{lemma:loc_time_zero}. Note that $\mu$ satisfies (\ref{eqn:cond_atoms}). Let $g$ be the unique solution of (\ref{eqn:integral_eqn}) with respect to $\mu$ and $G(x) := \int_0^x g(y)\, \myd y$, $x \in \overline{\mathbb{R}}$, its strictly increasing and continuous primitive. Using similar arguments as in (\ref{eqn:spacetrans_good_points}), for $Y := G(X)$ we obtain
\[
	Y_t = \int_0^t g(X_s) \, b(X_s) \, \myd B_s 
							+ \int_\mathbb{R} g(y-) \, L_+^X(t,y) \, \ind_{A_2}(y) \, \nu(\myd y), \qquad t < S_\infty^X, \ \mathbf{P}\text{-a.s.}
\]
Having in mind that $0 \notin A_2$ and that $\nu$ is a finite signed measure on $\mathscr{B}([-1,1])$, we conclude\footnote{$\inf \emptyset := + \infty$.} $c := \inf A_2 > 0$. Therefore, $\tau_c := \inf\{t \geq 0 : X_t = c\}$ is a strictly positive $\mathbb{F}$-stopping time. For the stopped process $Y_t^{\tau_c} := Y_{\tau_c \wedge t}$, $t \geq 0$, due to (\ref{eqn:loc_time_zero_outside_compact_interval}) it holds
\[
	Y_t^{\tau_c} = \int_0^{\tau_c \wedge t} g(X_s) \, b(X_s) \, \myd B_s, 
											 \qquad t < S_\infty^X, \ \mathbf{P}\text{-a.s.}
\]
Since $G$ maps $[0,+\infty)$ into $[0,+\infty)$, the last relation means that $Y^{\tau_c}$ is a non-negative continuous local martingale up to $S_\infty^X$ started at zero, which implies immediately $Y_t^{\tau_c} = 0$, $t \geq 0$, $\mathbf{P}$-a.s. Hence, because of $\tau_c = \inf\{t \geq 0: Y_t = G(c)\}$ and $G(c) > 0$, we conclude $\tau_c = +\infty$ $\mathbf{P}$-a.s. Finally, we obtain $X_t = 0$, $t \geq 0$, $\mathbf{P}$-a.s. Moreover, we have
\[
	0 = \assPro{X}_t = \int_0^t b^2(X_s) \, \myd s = b^2(0) \, t, \qquad t \geq 0, \ \mathbf{P}\text{-a.s.},
\]
which is only possible if $b(0)=0$. \\
\indent \textbf{4)} Now we prove (i) and (ii) for the case of the left local time in Eq. (\ref{eqn:SDE_mvd}). We recall that we only need to consider the initial value $x_0 = 0$. So, we treat the equation
\[
	X_t = \int_0^t b(X_s) \, \myd B_s + \int_\mathbb{R} L_-^X(t,y) \, \nu(\myd y),
\]
where we assume $\nu(\{0\}) \leq -1/2$. Let $(X,\mathbb{F})$ be a solution of this equation. Introducing $\widetilde{b}(x) = b(-x)$, $x \in \mathbb{R}$, and the Wiener process $\widetilde{B} = -B$, for $-X$ it holds
\[
	-X_t = \int_0^t \widetilde{b}(-X_s) \, \myd \widetilde{B}_s - \int_\mathbb{R} L_-^X(t,y)\,\nu(\myd y), 
	\qquad t < S_\infty^X, \ \mathbf{P}\text{-a.s.}
\]
We define $\widetilde{\nu}(A) := -\nu(-A)$,\footnote{For $A \subseteq \mathbb{R}$, we set $-A := \{-x : x \in A\}$.} $A \in \mathscr{B}([-N,N])$, $N \in \mathbb{N}$. Then, since $S_\infty^X = S_\infty^{-X}$ and since $L_-^X(t,y) = L_+^{-X}(t,-y)$, $t < S_\infty^X$, $y \in \mathbb{R}$, $\mathbf{P}$-a.s., which follows immediately from (\ref{eqn:loc_time_as_limit}), we get
\[
	-X_t = \int_0^t \widetilde{b}(-X_s) \, \myd \widetilde{B}_s + \int_\mathbb{R} L_+^{-X}(t,y)\,\widetilde{\nu}(\myd y), 
	\qquad t < S_\infty^{-X}, \ \mathbf{P}\text{-a.s.}
\]
Hence, $(-X, \mathbb{F})$ is a solution of an equation of type (\ref{eqn:SDE_mvd}) with right local time started at zero and for the drift measure $\widetilde{\nu}$ it holds $\widetilde{\nu}(\{0\}) = - \nu(\{0\}) \geq 1/2$. Using step 2) and 3) of the proof yields $X_t \leq 0$, $t < S_\infty^X$, $\mathbf{P}$-a.s. if $\nu(\{0\}) \leq 1/2$ and $X_t = 0$, $t < S_\infty^X$, $\mathbf{P}$-a.s. as well as $b(0)=0$ if $\nu(\{0\}) < -1/2$.\\
\indent \textbf{5)} For the proof in case of the symmetric local time in Eq. (\ref{eqn:SDE_mvd}) all tools are already presented above. But one must be careful in adapting them. We provide the idea but the details are left to the reader. Again without loss of generality we assume $x_0 = 0$. Let $(X,\mathbb{F})$ be a solution to
\[
	X_t = \int_0^t b(X_s) \, \myd B_s + \int_\mathbb{R} \hat{L}^X(t,y) \, \nu(\myd y).
\]
We first assume that $|\nu(\{0\})| > 1$ or $\nu(\{0\}) = 1$. Similar as above in step 2) for $Z_t = X_t \wedge 0$, $t \geq 0$, we deduce
\[
	Z_t = \int_0^t \ind_{(-\infty,0)}(Z_s) \, b(Z_s) \, \myd B_s 
				+ \int_\mathbb{R} \hat{L}^Z(t,y) \, \ind_{(-\infty,0)}(y)  \, \nu(\myd y), \qquad t < S_\infty^X, \ \mathbf{P}\text{-a.s.},
\]
Furthermore, with the help of the sets
\begin{gather*}
	A_1 := \{y \in (-\infty,0) : |\nu(\{y\})| \geq 1\}, \quad  
	A_2 := \{y \in (-\infty,0) : \nu(\{y\}) = 1\}, \\
	A_3 := \{y \in (-\infty,0) : \nu(\{y\}) = -1\},
\end{gather*}
the set function $\mu(\myd y) := \ind_{(-\infty,0) \setminus A_1} (y) \, \nu (\myd y)$ and Lemma \ref{lemma:loc_time_zero}, we can write 
\[
	Z_t = \int_0^t \ind_{(-\infty,0)}(Z_s) \, b(Z_s) \, \myd B_s 
										+ \int_\mathbb{R} \hat{L}^{Z}(t,y) \, \mu(\myd y)
										+ \int_{A_2} L_+^{Z}(t,y) \, \nu(\myd y)
										+ \int_{A_3} L_-^{Z}(t,y) \, \nu(\myd y),
\]
$t < S_\infty^X$, $\mathbf{P}$-a.s. Using the unique solution $g$ of the integral equation (\ref{eqn:integral_eqn}) in case of the symmetric local time with respect to $\mu$, its primitive $G(x) := \int_0^x g(y)\,\myd y$, $x \in \overline{\mathbb{R}}$, and setting $Y := G(X)$, analogously as in (\ref{eqn:spacetrans_good_points}) we get
\[
 Y_t	= \int_0^t g(Z_s) \, \myd M_s + \int_{A_2} L_+^{Z}(t,y) \, g(y)  \, \nu(\myd y) + \int_{A_3} L_-^{Z}(t,y) \, g(y)  \, \nu(\myd y), 
 \qquad t < S_\infty^X, \ \mathbf{P}\text{-a.s.}
\]
where $M := \int_0^\cdot \ind_{(-\infty,0)}(Z_s) \, b(Z_s) \, \myd B_s$. Note that, because of $\mu(\{y\}) = 0$, $y \in A_2 \cup A_3$, the function $g$ is continuous in the points of $A_2 \cup A_3$. Since $\nu$ is a finite signed measure on $\mathscr{B}([-N,N])$, $N\in \mathbb{N}$, we deduce\footnote{$\sup \emptyset := -\infty$.} $c:= \sup A_3 <0$ and hence the $\mathbb{F}$-stopping time 
\[
	\tau_c := \inf\{t\geq 0 : Z_t = c\} = \inf\{t\geq 0 : Y_t = G(c)\}
\]
is strictly positive. Clearly, due to (\ref{eqn:loc_time_zero_outside_compact_interval}) we have
\[
 Y_t	= \int_0^t g(Z_s) \, \myd M_s + \int_{A_2} L_+^{Z}(t,y) \, g(y)  \, \nu(\myd y) \qquad t < \tau_c \wedge S_\infty^X, \ \mathbf{P}\text{-a.s.}
\]
and similarly to (\ref{eqn:aux_2}) and the lines thereafter we conclude $Z_t = 0$, $t < \tau_c \wedge S_\infty^X$, $\mathbf{P}$-a.s. By the definition of $\tau_c$ this gives $Z_t = 0$, $t < S_\infty^X$, $\mathbf{P}$-a.s. and therefore $X_t \geq 0$, $t \geq 0$, $\mathbf{P}$-a.s. \\
\indent Now we assume $|\nu(\{0\})| > 1$ or $\nu(\{0\}) = -1$. As shown in step 4), $-X$ is a solution of Eq. (\ref{eqn:SDE_mvd}) with drift measure $\widetilde{\nu}$ defined by $\widetilde{\nu}(A) = - \nu(-A)$, $A \in \mathscr{B}([-N,N])$, $N \in \mathbb{N}$. Hence, from the result just proven we obtain $-X_t \geq 0$, $\geq 0$, $\mathbf{P}$-a.s. Summarizing, this yields
\[
	X_t = 0, \ t \geq 0, \ \mathbf{P}\text{-a.s. and necessarily $b(0)=0$ if $|\nu(\{0\})| > 1$,}
\]
\[
	X_t \geq 0, \ t \geq 0, \ \mathbf{P}\text{-a.s. if $\nu(\{0\}) = 1$,}
\]
and
\[
	X_t \leq 0, \ t \geq 0, \ \mathbf{P}\text{-a.s. if $\nu(\{0\}) = -1$.}
\]
This finishes the proof of (i) and (ii) for symmetric local time.
\end{proof}

Introducing the $\mathbb{F}$-stopping time $\tau_x := \inf\{t \geq 0 : X_t = x\}$ for a solution $(X,\mathbb{F})$ of Eq. (\ref{eqn:SDE_mvd}), we can state more generally the following corollary to Theorem \ref{theorem:reflection_and_absorbing}.
\begin{corollary}\label{cor:reflection_and_absorbing}
	Let $(X,\mathbb{F})$ be a solution of Eq. (\ref{eqn:SDE_mvd}) with right (resp., left, symmetric) local time and arbitrary initial condition $X_0$. 
	Then the following statements are satisfied: \medskip
	
	(i)  If $x \in \mathbb{R}$ is such that $\nu(\{x\}) \geq 1/2$ (resp., $\nu(\{x\}) \leq -1/2$, $|\nu(\{x\})| \geq 1$), then on $\{\tau_x < +\infty\}$  			it holds
			 \[
			 	 \begin{split}
													  	&X_{\tau_x + t} \geq x, \ t \geq 0, \ \mathbf{P}\text{-a.s}\phantom{\Bigr)} \\
		  	 \Bigl(\text{resp., } &X_{\tau_x + t} \leq x,  \ t \geq 0, \ \mathbf{P}\text{-a.s},\\
		  										  	&X_{\tau_x + t} \leq x,  \ t \geq 0,\ \mathbf{P}\text{-a.s} \text{ if } \nu(\{x\}) \leq -1
		  										  	\text{ and } X_{\tau_x + t} \geq x, \ t \geq 0, \ \mathbf{P}\text{-a.s} \text{ if } \nu(\{x\}) \geq 1 \Bigr).
				 \end{split}
			 \]

	(ii) If $x \in \mathbb{R}$ is such that $\nu(\{x\}) > 1/2$ (resp., $\nu(\{x\}) < -1/2$, $|\nu(\{x\})| > 1$), then it holds 
			 \[
			 		X_{\tau_x + t} = x, \qquad t \geq 0, \ \mathbf{P}\text{-a.s. on } \{\tau_x < +\infty\}.	
			 \]
			 In particular, if $\mathbf{P}(\{\tau_x < +\infty\})>0$, then $b(x) = 0$ must be fulfilled.
\end{corollary}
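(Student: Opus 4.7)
The plan is to reduce the corollary to Theorem \ref{theorem:reflection_and_absorbing} by a strong-Markov-type restart at the hitting time $\tau_x$. On the event $\{\tau_x < +\infty\}$, continuity of $X$ gives $X_{\tau_x} = x$, so one introduces the shifted process $\widetilde{X}_t := X_{\tau_x + t}$, the shifted filtration $\widetilde{\mathbb{F}}_t := \mathcal{F}_{\tau_x + t}$, and the shifted noise $\widetilde{B}_t := B_{\tau_x + t} - B_{\tau_x}$. To handle the fact that $\tau_x$ may be infinite with positive probability, one works under the conditional probability $\mathbf{P}(\,\cdot\, | \{\tau_x < +\infty\})$ (assuming this event has positive probability; otherwise there is nothing to prove), so that $\widetilde{B}$ becomes a genuine Wiener process with respect to $\widetilde{\mathbb{F}}$ by the strong Markov property of Brownian motion.

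Next, one checks that $\widetilde{X}$ again solves an equation of type (\ref{eqn:SDE_mvd}) with the \emph{same} drift measure $\nu$ and the same type of local time, but now started at $x$. The two ingredients are the additive decompositions across $\tau_x$,
\[
  \int_0^{\tau_x + t} b(X_s)\,\myd B_s \;=\; \int_0^{\tau_x} b(X_s)\,\myd B_s + \int_0^{t} b(\widetilde{X}_s)\,\myd \widetilde{B}_s
\]
and
\[
  L^{X}(\tau_x + t, y) - L^{X}(\tau_x, y) \;=\; L^{\widetilde{X}}(t, y),
  \qquad t < S_\infty^{\widetilde{X}},\ y \in \mathbb{R},\ \mathbf{P}\text{-a.s.},
\]
the second of which follows from the approximation formula (\ref{eqn:loc_time_as_limit}) together with the analogous decomposition of $\assPro{X}$ at the stopping time $\tau_x$. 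Subtracting Eq. (\ref{eqn:SDE_mvd}) at time $\tau_x$ from the same equation at time $\tau_x + t$ then yields
\[
  \widetilde{X}_t \;=\; x + \int_0^t b(\widetilde{X}_s)\,\myd \widetilde{B}_s + \int_{\mathbb{R}} L^{\widetilde{X}}(t,y)\,\nu(\myd y),
  \qquad t < S_\infty^{\widetilde{X}},\ \mathbf{P}\text{-a.s. on } \{\tau_x < +\infty\}.
\]
One also has to verify that $(\widetilde{X}, \widetilde{\mathbb{F}})$ is a semimartingale up to $S_\infty^{\widetilde{X}} = S_\infty^X - \tau_x$ in the sense of the paper, which is immediate from the analogous localizing sequence for $X$ shifted by $\tau_x$.

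Once this restart construction is in place, Theorem \ref{theorem:reflection_and_absorbing} applied to $\widetilde{X}$ and the point $x$ delivers exactly the reflecting behaviour in (i) and the absorbing behaviour in (ii); in the latter case, the identity $0 \equiv \assPro{\widetilde{X}}_t = \int_0^t b^2(x)\,\myd s = b^2(x)\,t$ forces $b(x) = 0$, as already observed in step \textbf{3)} of the proof of Theorem \ref{theorem:reflection_and_absorbing}. The main technical obstacle, and essentially the only nontrivial point, is the careful justification of the restart: ensuring that $\widetilde{B}$ is a Wiener process for $\widetilde{\mathbb{F}}$ on the conditioned space and that the decomposition of $L^X$ across $\tau_x$ holds in the up-to-$S_\infty^X$ semimartingale framework. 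Everything else is a direct translation of Theorem \ref{theorem:reflection_and_absorbing} via the strong Markov property.
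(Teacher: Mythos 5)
Your proposal is correct and follows essentially the same route as the paper: restart the solution at $\tau_x$ on the trace/conditioned space $\{\tau_x<+\infty\}$, verify that the shifted process solves Eq. (\ref{eqn:SDE_mvd}) started at $x$ with the same drift measure, and then invoke Theorem \ref{theorem:reflection_and_absorbing}. The paper leaves exactly these restart details to the reader, and your write-up supplies them in the expected way.
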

\begin{proof} 
We only give the idea of the proof. The details are left to the reader. If $\mathbf{P}(\{\tau_x < +\infty\}) > 0$ is satisfied, then we use the trace of the underlying probability space with respect to $\widetilde{\Omega} = \{\tau_x < +\infty\}$. The process $X_{\tau_x + t}$, $t\geq 0$, considered on $\widetilde{\Omega}$, which is adapted to $\widetilde{\mathbb{F}} = (\mathcal{F}_t \cap \widetilde{\Omega})_{t \geq 0}$, is again a solution of Eq. (\ref{eqn:SDE_mvd}) but started at $x$. Hence, we can apply Theorem \ref{theorem:reflection_and_absorbing} to obtain the statements of the corollary.
\end{proof}
Concerning the question of non-existence of solutions of Eq. (\ref{eqn:SDE_mvd}), we see from Theorem \ref{theorem:reflection_and_absorbing} and its Corollary \ref{cor:reflection_and_absorbing} that the existence of a point $x \in \mathbb{R}$ with $\nu(\{x\}) > 1/2$ (resp., $\nu(\{x\}) < -1/2$, $|\nu(\{x\})| > 1$), in general, does not imply automatically that there is no solution. In general, there can be solutions which do not reach $x$ or, if $b(x)=0$ is satisfied, which stay in $x$ after reaching this level. But it holds the following
\begin{corollary}\label{cor:non_existence_of_a_solution}
	If $x \in \mathbb{R}$ is such that $\nu(\{x\}) > 1/2$ (resp., $\nu(\{x\}) < -1/2$, $|\nu(\{x\})| > 1$) and $b(x) \neq 0$, then 	
	there is no solution $(X,\mathbb{F})$ of Eq. (\ref{eqn:SDE_mvd}) with right (resp., left, symmetric) local time and arbitrary initial condition 
	$X_0$ which satisfies $\mathbf{P}(\{\tau_x < +\infty\}) > 0$. In particular, there is no solution started at $X_0 = x$.
\end{corollary}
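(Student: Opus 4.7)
The plan is to obtain the statement as an immediate contrapositive of Corollary \ref{cor:reflection_and_absorbing}(ii), so that essentially no new work is required beyond invoking the previous result in the right direction.

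More precisely, I would argue by contradiction. Suppose $x \in \mathbb{R}$ satisfies $\nu(\{x\}) > 1/2$ (resp., $\nu(\{x\}) < -1/2$, $|\nu(\{x\})| > 1$) and that, nevertheless, there exists a solution $(X,\mathbb{F})$ of Eq. (\ref{eqn:SDE_mvd}) with the corresponding choice of local time such that $\mathbf{P}(\{\tau_x < +\infty\}) > 0$. Then the final assertion of Corollary \ref{cor:reflection_and_absorbing}(ii) applies directly and forces $b(x) = 0$, which contradicts the standing hypothesis $b(x) \neq 0$. Hence no such solution can exist.

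For the "in particular" claim, I would observe that if $X_0 = x$, then by continuity of $X$ and the definition of $\tau_x$ we have $\tau_x = 0$ $\mathbf{P}$-a.s., so $\mathbf{P}(\{\tau_x < +\infty\}) = 1 > 0$, and the first part applies to rule out the existence of any solution started at $X_0 = x$.

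Since the statement is essentially the contrapositive of the already established Corollary \ref{cor:reflection_and_absorbing}(ii), I do not anticipate any real obstacle; the only thing to keep in mind is that Corollary \ref{cor:reflection_and_absorbing} is formulated for the three cases of local time simultaneously, so one should note that the assumption on $\nu(\{x\})$ in the present corollary matches, in each of the three cases, precisely the hypothesis under which the absorption conclusion (and the ensuing identity $b(x) = 0$) was derived there.
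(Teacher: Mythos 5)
Your proposal is correct and is exactly the argument the paper intends: the corollary is stated without proof precisely because it is the contrapositive of the final assertion of Corollary \ref{cor:reflection_and_absorbing}(ii), and your handling of the ``in particular'' clause via $\tau_x = 0$ when $X_0 = x$ is the right observation.
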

\begin{remark}
	(i) In the special case of a drift measure $\nu = \beta \, \delta_0$ where $|\beta|=1$ and $\delta_0$ denotes the Dirac measure in zero and
	the symmetric local time in Eq. 
	(\ref{eqn:SDE_mvd}) the result of Corollary \ref{cor:reflection_and_absorbing}(i) was already presented in \cite{blei_bessel_2011}, Lemma 2.24. 
	Moreover, Corollary \ref{cor:non_existence_of_a_solution} contains \cite{blei_bessel_2011}, Lemma 2.25, which deals with the non-existence of a 
	solution to Eq. (\ref{eqn:SDE_mvd}) with symmetric local time and drift measure $\nu = \beta \, \delta_0$ such that $|\beta|>1$. 
	
	(ii) In W. Schmidt \cite{schmidt:1989} one-dimensional stochastic differential equations with reflecting barriers, more precisely, equations of the 
	form
	\begin{align}\label{eqn:SDE_refl_schmidt}
		\left\{
			\begin{aligned}
				\text{(i)} 		&\phantom{=}\displaystyle X_t = X_0 + \int_0^t b(X_s) \, \myd B_s + L_t - R_t, \\
				\text{(ii)} 	&\phantom{=}X_t \in [r_1,r_2], \\
				\text{(iii)} &\phantom{=}\text{$L_t$, $R_t$ are increasing process with $L_0 = R_0 = 0$ and} \\
											&\phantom{===}\begin{gathered}
											 	\int_0^t \ind_{\{r_1\}} (X_s) \, \myd L_s = L_t, \quad \int_0^t \ind_{\{r_2\}} (X_s) \, \myd R_s = R_t, \quad t \geq 0,
											 \end{gathered}
			\end{aligned}		
		\right.
	\end{align}
	where $r_1 < r_2$, were studied. Clearly, $(X,\mathbb{F})$ on $(\Omega,\mathcal{F},\mathbf{P})$ is called a solution of Eq. 
	(\ref{eqn:SDE_refl_schmidt}) if there exists a Wiener 
	process $(B,\mathbb{F})$ and two processes $(L,\mathbb{F})$ and $(R,\mathbb{F})$ such that (\ref{eqn:SDE_refl_schmidt}) is satisfied. For a solution 
	$(X,\mathbb{F})$ of Eq. (\ref{eqn:SDE_refl_schmidt}) it is not difficult to check that $L$ and $R$ are just the symmetric local times of $X$ in 
	$r_1$ and $r_2$, respectively. Our results, especially Corollary \ref{cor:reflection_and_absorbing}(i), now show that Eq. 
	(\ref{eqn:SDE_refl_schmidt}) even coincides with Eq. (\ref{eqn:SDE_mvd}) for the diffusion coefficient $b$, symmetric local time, drift measure 
	$\nu = \delta_{r_1} - \delta_{r_2}$ and initial condition $X_0 \in [r_1,r_2]$, i.e., with the equation
	\begin{equation}\label{eqn:sde_refl}
		X_t = X_0 + \int_0^t b(X_s) \, \myd B_s + \hat{L}^X(t,r_1) - \hat{L}^X(t,r_2), \qquad X_0 \in [r_1,r_2]\,.
	\end{equation}
	For Eq. (\ref{eqn:sde_refl}) the condition (\ref{eqn:SDE_refl_schmidt})(ii) needs not be specified since it is satisfied for any solution which 
	follows via Corollary \ref{cor:reflection_and_absorbing}(i). Moreover, (\ref{eqn:SDE_refl_schmidt})(iii) holds because of 
	(\ref{eqn:int_wrt_loc_time}).
	
	(iii) Similar as in the preceding remark the non-negativity condition $X_t \geq 0$, $t \geq 0$, of the solution to the equation
	\[
		X_t = X_0 + \int_0^t \ind_{(0,+\infty)}(X_s) \, \myd B_s + a \, \int_0^t \ind_{\{0\}} (X_s) \, \myd s,
	\]
	with $X_0 \geq 0$ and $a \geq 0$, which is studied in R. Chitashvili \cite{chitashvili}, can be dropped. More detailed, for a solution 
	$(X,\mathbb{F})$ of this equation by (\ref{eqn:loc_time_as_limit}) it follows $L_-^X(t,0) = 0$, $t \geq 0$, $\mathbf{P}$-a.s. Combined with 
	(\ref{eqn:loc_time_and_variation_process}) this implies
	\[
		\frac{1}{2}\, L_+^X(t,0) = a \, \int_0^t \ind_{\{0\}} (X_s) \, \myd s, \qquad t \geq 0, \ \mathbf{P}\text{-a.s.}
	\]
	Hence, $(X,\mathbb{F})$ also solves Eq. (\ref{eqn:SDE_mvd}) with diffusion coefficient $b=\ind_{(0,+\infty)}$, right local time and drift measure 
	$\nu = 1/2 \,\delta_0$. Therefore, we can conclude $X_t \geq 0$, $t \geq 0$, $\mathbf{P}$-a.s. if $X_0 \geq 0$.	\mydia
\end{remark}
With the help of Lemma \ref{lemma:loc_time_zero} we can also give a relation between the different versions of Eq. (\ref{eqn:SDE_mvd}). More precisely, we give a relation between Eq. (\ref{eqn:SDE_mvd}) using the right local time and Eq. (\ref{eqn:SDE_mvd}) with symmetric local time. With the following proposition we complement \cite{bass_chen}, Theorem 2.2(a).
\begin{proposition}\label{prop:equiv_mvd_right_and_sym_loc_time}
	(i) $(X,\mathbb{F})$ is a solution of Eq. (\ref{eqn:SDE_mvd}) with right local time and drift measure $\nu$ if and only if it is 
			a solution of Eq. (\ref{eqn:SDE_mvd}) with symmetric local time and drift measure
			\[
		 		\hat{\nu}(\myd y) = \left( \frac{1}{2-2\nu(\{y\})}\, \ind_{\{z\in\mathbb{R} : \, \nu(\{z\}) < 1\}}(y) 
		 														+  \ind_{\{z\in\mathbb{R} : \, \nu(\{z\}) \geq 1\}}(y)\right) 2\nu(\myd y)\,.
			\]

	(ii) $(X,\mathbb{F})$ is a solution of Eq. (\ref{eqn:SDE_mvd}) with symmetric local time and drift measure 
			 $\hat{\nu}$ satisfying $\hat{\nu}(\{x\}) \neq -1$, $x \in \mathbb{R}$, if and only if it is a solution of Eq. (\ref{eqn:SDE_mvd}) 
			 with right local time and drift measure 
			 \[
		 	 	\nu(\myd y) = \left( \frac{2}{1+\hat{\nu}(\{y\})}\, \ind_{\{z\in\mathbb{R} : \, \hat{\nu}(\{z\}) > -1\}}(y) 
		 					- \ind_{\{z\in\mathbb{R} : \, \hat{\nu}(\{z\}) < -1\}}(y)\right) \frac{1}{2}\hat{\nu}(\myd y)\,.
			 \]
\end{proposition}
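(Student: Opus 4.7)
The plan is to reduce both (i) and (ii) to the identity
\begin{equation*}
	\int_{\mathbb{R}} L_+^X(t,y)\,\nu(\myd y) = \int_{\mathbb{R}} \hat{L}^X(t,y)\,\hat{\nu}(\myd y), \qquad t < S_\infty^X,\ \mathbf{P}\text{-a.s.},
\end{equation*}
together with consistency of the corresponding atom relations derived from (\ref{eqn:loc_time_and_variation_process}). Since the stochastic integral $\int_0^t b(X_s)\,\myd B_s$ is common to both forms of Eq.~(\ref{eqn:SDE_mvd}) and the bounded variation component of a continuous semimartingale up to $S_\infty^X$ is unique, once the identity is established in the appropriate direction the same pair $(X,B)$ solves the alternate form.

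For the forward direction of (i), I would decompose $\mathbb{R}$ into the set $A_1 := \{y : \nu(\{y\}) \geq 1\}$, the at most countable set of atoms $\{y : 0 \neq \nu(\{y\}) < 1\}$, and the non-atomic set $\{y : \nu(\{y\}) = 0\}$. On $A_1$, Lemma~\ref{lemma:loc_time_zero} applied to the right local time equation (where $\nu(\{y\}) > 1/2$) gives $L_+^X(t,y) = L_-^X(t,y) = 0$, so $\hat{L}^X(t,y) = 0$ and both sides of the identity vanish on $A_1$. At each atom $y_k$ with $\nu(\{y_k\}) < 1$, the calculation at the start of the proof of Lemma~\ref{lemma:loc_time_zero} yields $L_-^X(t,y_k) = (1-2\nu(\{y_k\}))\,L_+^X(t,y_k)$, hence $\hat{L}^X(t,y_k) = (1-\nu(\{y_k\}))\,L_+^X(t,y_k)$; coupled with $\hat{\nu}(\{y_k\}) = \nu(\{y_k\})/(1-\nu(\{y_k\}))$, this makes each atomic summand agree. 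On the non-atomic set the same computation gives $L_+^X(t,y) = L_-^X(t,y) = \hat{L}^X(t,y)$, and the formula reduces to $\hat{\nu}(\myd y) = \nu(\myd y)$, so the non-atomic contributions also match.

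For the reverse direction of (i) and for (ii), I would start from $X$ solving the symmetric local time equation with drift $\hat{\nu}$. The analogous computation now reads $(1-\hat{\nu}(\{y\}))\,L_+^X(t,y) = (1+\hat{\nu}(\{y\}))\,L_-^X(t,y)$. For atoms with $\hat{\nu}(\{y\}) > -1$, the formula $\nu(\{y\}) = \hat{\nu}(\{y\})/(1+\hat{\nu}(\{y\}))$ of (ii) satisfies $\nu(\{y\}) < 1$ and converts the above into the right-local-time atom relation $L_-^X = (1-2\nu(\{y\}))\,L_+^X$; the same partitioning argument as in the previous paragraph then shows the integrals agree. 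For atoms with $\hat{\nu}(\{y\}) < -1$, one has $|\hat{\nu}(\{y\})| > 1$, so Lemma~\ref{lemma:loc_time_zero} for symmetric local time forces $L_+^X = L_-^X = 0$ and these atoms contribute nothing to either side; consistently, the formula $\nu(\{y\}) = -\hat{\nu}(\{y\})/2 > 1/2$ lies in the regime where Lemma~\ref{lemma:loc_time_zero} for the right local time also forces $L_+^X = 0$, so the corresponding atoms contribute nothing to the right-local-time integral either. The excluded value $\hat{\nu}(\{x\}) = -1$ is precisely the singularity of the first piece of the formula.

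The main obstacle will be the case bookkeeping: verifying that every regime of atom values — in particular the borderline cases $\nu(\{y\}) = 1/2$ corresponding to $\hat{\nu}(\{y\}) = 1$ and $\nu(\{y\}) = 1$ corresponding to $\hat{\nu}(\{y\}) = 2$ — is consistently handled by the combination of the explicit formula and Lemma~\ref{lemma:loc_time_zero}, and that the at most countable set of atoms makes the termwise handling legitimate on each of the sets $[-N,N]$.
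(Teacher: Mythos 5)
Your proposal is correct and follows essentially the same route as the paper: both rest on the pointwise relation $L_-^X(t,y)=(1-2\nu(\{y\}))\,L_+^X(t,y)$ (equivalently $(1-\hat\nu(\{y\}))L_+^X=(1+\hat\nu(\{y\}))L_-^X$ in the symmetric case) obtained from (\ref{eqn:int_wrt_loc_time}) and (\ref{eqn:loc_time_and_variation_process}), combined with Lemma \ref{lemma:loc_time_zero} to kill the local times on the exceptional atom sets, so that the two drift integrals coincide. The paper merely organizes the cases by the sets $\{\nu>1/2\}$, $\{\nu=1/2\}$, $\{\nu<1/2\}$ (resp.\ $\{|\hat\nu|>1\}$, $\{\hat\nu=1\}$, $\{|\hat\nu|<1\}$) instead of your atomic/non-atomic split, and obtains the converse implications by composing the two forward steps rather than by a direct second partition; the content is the same.
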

\begin{proof}
\textbf{1)} Let $(X,\mathbb{F})$ be a solution of Eq. (\ref{eqn:SDE_mvd}) with right local time, then by Lemma \ref{lemma:loc_time_zero} it is
\[
	\hat{L}^X(t,y) = (L^X_+(t,y) + L_-^X(t,y))/2 = 0, \qquad t < S_\infty^X, \ y \in \{\nu> 1/2\}, \ \mathbf{P}\text{-a.s.}
\]
and
\[
	\hat{L}^X(t,y) = \frac{1}{2} \, L^X_+(t,y) = (1-\nu(\{y\})) \, L_+^X(t,y), \qquad t < S_\infty^X, \ y \in \{\nu = 1/2\}, \ \mathbf{P}\text{-a.s.}
\]
Moreover, via (\ref{eqn:loc_time_and_variation_process}) we see
\[
	L_+^X(t,y) - L_-^X(t,y) = 2 \, L_+^X(t,y) \, \nu(\{y\}), \qquad t < S_\infty^X, \ y \in \{\nu < 1/2\}, \ \mathbf{P}\text{-a.s.}
\]
and we can conclude
\[
	\hat{L}^X(t,y) = \left(1-\nu(\{y\})\right) \, L_+^X(t,y), \qquad t < S_\infty^X, \ y \in \{\nu < 1/2\}, \ \mathbf{P}\text{-a.s.}
\]
Summarizing, we obtain that $(X,\mathbb{F})$ fulfils Eq. (\ref{eqn:SDE_mvd}) with symmetric local time and drift measure $\hat{\nu}$ as given under (i). \\
\indent \textbf{2)} If $(X,\mathbb{F})$ is a solution of Eq. (\ref{eqn:SDE_mvd}) with symmetric local time and drift measure $\hat{\nu}$, then Lemma \ref{lemma:loc_time_zero} implies
\[
	L_{\pm}^X(t,y) = \hat{L}^X(t,y) = 0, \qquad t < S_\infty^X, \ y \in \{|\hat{\nu}| > 1\}, \ \mathbf{P}\text{-a.s.},
\]
and
\[
	L_-^X(t,y) = 0, \qquad t < S_\infty^X, \ y \in \{\hat{\nu} = 1\}, \ \mathbf{P}\text{-a.s.}
\]
Furthermore, via (\ref{eqn:loc_time_and_variation_process}) we obtain
\[
	L_+^X(t,y) - L_-^X(t,y) = 2 \, \hat{L}^X(t,y) \, \hat{\nu}(\{y\}), \qquad t < S_\infty^X, \ y \in \mathbb{R}, \ \mathbf{P}\text{-a.s.},
\]
and hence
\[
	L_+^X(t,y) = (1 + \hat{\nu}(\{y\})) \, \hat{L}(t,y), \qquad t < S_\infty^X, \ y \in \{|\hat{\nu}| < 1\}, \ \mathbf{P}\text{-a.s.}
\]
These observations mean, if we additionally assume $\hat{\nu}(\{x\}) \neq -1$, $x \in \mathbb{R}$, then $(X,\mathbb{F})$ satisfies Eq. (\ref{eqn:SDE_mvd}) with right local time and drift measure $\nu$ as given in (ii). \\
\indent \textbf{3)} Note that $\hat{\nu}$ as defined in (i) satisfies $\hat{\nu}(\{x\}) > -1$, $x \in \mathbb{R}$, and it holds $\hat{\nu}(\{x\}) > 1$ if and only if $\nu(\{x\}) > 1/2$. Hence, the remaining part of assertion (i) follows by an application of step 2) of the proof. To prove the sufficiency in (ii), we remark that $\nu$ introduced in (ii) satisfies $\nu(\{x\}) > 1/2$ if and only if $|\hat{\nu}(\{x\})| > 1$ and we can conclude using step 1) above.
\end{proof}
\begin{remark}
	(i) The drift measures $\hat{\nu}$ and $\nu$ as defined in Proposition \ref{prop:equiv_mvd_right_and_sym_loc_time} (i) and (ii), 
	respectively, are of course not unique.
	
	(ii) Proposition \ref{prop:equiv_mvd_right_and_sym_loc_time}(ii) gives an alternative to conclude the results of Theorem 
	\ref{theorem:reflection_and_absorbing} and its Corollary \ref{cor:reflection_and_absorbing} and \ref{cor:non_existence_of_a_solution} for Eq. 
	(\ref{eqn:SDE_mvd}) with symmetric local time and a drift measure satisfying $\nu(\{x\}) \neq -1$, $x \in \mathbb{R}$, from the results for the 
	right local time.
		
	(iii) The case $\hat{\nu}(\{x\}) = -1$ for some $x \in \mathbb{R}$ is excluded since this condition is responsible for reflection to the left, but 
	this cannot occur for solutions of Eq. (\ref{eqn:SDE_mvd}) where the right local time is chosen: By Lemma \ref{lemma:loc_time_zero} we then have 
	$L_+^X(t,x) = 0$. \mydia
\end{remark}
Similar conclusions as in Proposition \ref{prop:equiv_mvd_right_and_sym_loc_time} can be made when the left local time is involved. For the sake of completeness we state the corresponding results which can be proven by analogous arguments as before.
\begin{proposition}
	(i) $(X,\mathbb{F})$ is a solution of Eq. (\ref{eqn:SDE_mvd}) with right local time and drift measure $\nu$ satisfying 
			$\nu(\{x\}) \neq 1/2$, $x \in \mathbb{R}$, if and only if it is a solution of Eq. (\ref{eqn:SDE_mvd}) with left local time and drift measure
			\[
		 		\overline{\nu}(\myd y) = \left( \frac{1}{1-2\nu(\{y\})}\, \ind_{\{z\in\mathbb{R} : \, \nu(\{z\}) < 1/2\}}(y) 
		 														-  \ind_{\{z\in\mathbb{R} : \, \nu(\{z\}) > 1/2\}}(y)\right) \nu(\myd y)\,.
			\]

	(ii) $(X,\mathbb{F})$ is a solution of Eq. (\ref{eqn:SDE_mvd}) with left local time and drift measure 
			 $\overline{\nu}$ satisfying $\overline{\nu}(\{x\}) \neq -1/2$, $x \in \mathbb{R}$, if and only if it is a solution of Eq. (\ref{eqn:SDE_mvd}) 
			 with right local time and drift measure 
			 \[
		 	 	\nu(\myd y) = \left( \frac{1}{1+2\overline{\nu}(\{y\})}\, \ind_{\{z\in\mathbb{R} : \, \overline{\nu}(\{z\}) > -1/2\}}(y) 
		 					+ \ind_{\{z\in\mathbb{R} : \, \overline{\nu}(\{z\}) < -1/2\}}(y)\right) \overline{\nu}(\myd y)\,.
			 \]
(iii) $(X,\mathbb{F})$ is a solution of Eq. (\ref{eqn:SDE_mvd}) with left local time and drift measure $\overline{\nu}$ if and only 
			if it is a solution of Eq. (\ref{eqn:SDE_mvd}) with symmetric local time and drift measure
			\[
		 		\hat{\nu}(\myd y) = \left( \frac{1}{2+2\overline{\nu}(\{y\})}\, \ind_{\{z\in\mathbb{R} : \, \overline{\nu}(\{z\}) > -1\}}(y) 
		 														+  \ind_{\{z\in\mathbb{R} : \, \overline{\nu}(\{z\}) \leq -1\}}(y)\right) 2\overline{\nu}(\myd y)\,.
			\]

	(iv) $(X,\mathbb{F})$ is a solution of Eq. (\ref{eqn:SDE_mvd}) with symmetric local time and drift measure 
			 $\hat{\nu}$ satisfying $\hat{\nu}(\{x\}) \neq 1$, $x \in \mathbb{R}$, if and only if it is a solution of Eq. (\ref{eqn:SDE_mvd}) 
			 with left local time and drift measure 
			 \[
		 	 	\overline{\nu}(\myd y) = \left( \frac{2}{1-\hat{\nu}(\{y\})}\, \ind_{\{z\in\mathbb{R} : \, \hat{\nu}(\{z\}) < 1\}}(y) 
		 					- \ind_{\{z\in\mathbb{R} : \, \hat{\nu}(\{z\}) > 1\}}(y)\right) \frac{1}{2}\hat{\nu}(\myd y)\,.
			 \]
\end{proposition}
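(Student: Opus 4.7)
The plan is to mimic the proof of Proposition \ref{prop:equiv_mvd_right_and_sym_loc_time}, treating each of the four parts by the same template. The two essential ingredients will be Lemma \ref{lemma:loc_time_zero}, which pinpoints the atoms on which one of the one-sided local times is forced to vanish, and the identity (\ref{eqn:loc_time_and_variation_process}) applied to the bounded-variation component $V_t = \int_\mathbb{R} L^X(t,y)\,\nu(\myd y)$ (respectively $\int_\mathbb{R} L_-^X(t,y)\,\overline{\nu}(\myd y)$ or $\int_\mathbb{R} \hat L^X(t,y)\,\hat{\nu}(\myd y)$) of the semimartingale decomposition. Inserting this $V$ into (\ref{eqn:loc_time_and_variation_process}) and invoking (\ref{eqn:int_wrt_loc_time}) produces, for every $y\in\mathbb{R}$, a pointwise linear relation between $L_+^X(t,y)$ and $L_-^X(t,y)$ whose coefficients depend on the mass assigned by the driving drift measure to $\{y\}$; these are precisely the three kinds of relations used in the proof of Lemma \ref{lemma:loc_time_zero}.

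For part (i), starting from a right local time solution with drift $\nu$, I would split $\mathbb{R}$ into $\{\nu<1/2\}$, $\{\nu=1/2\}$ and $\{\nu>1/2\}$. On the first set the linear identity gives $L_-^X(t,y)=(1-2\nu(\{y\}))\,L_+^X(t,y)$, which I invert to express $L_+^X$ via $L_-^X$; on $\{\nu>1/2\}$, Lemma \ref{lemma:loc_time_zero} forces $L_+^X(t,y)=0$, so the $\nu$–integral picks up nothing there and one is free to choose the density of $\overline{\nu}$ with respect to $\nu$; on $\{\nu=1/2\}$, Lemma \ref{lemma:loc_time_zero} gives $L_-^X(t,y)=0$, which is exactly the obstruction forcing the hypothesis $\nu(\{x\})\neq 1/2$ in the statement. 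Plugging these in turns $\int_\mathbb{R} L_+^X(t,y)\,\nu(\myd y)$ into $\int_\mathbb{R} L_-^X(t,y)\,\overline{\nu}(\myd y)$, proving the forward direction. The density $-1$ prescribed on $\{\nu>1/2\}$ is irrelevant for this implication, but is chosen exactly so that the formula in part (ii), applied to $\overline{\nu}$, recovers $\nu$; a short algebraic check on each atom case ($\nu(\{x\})<1/2$ gives $\overline{\nu}(\{x\})=\nu(\{x\})/(1-2\nu(\{x\}))>-1/2$, while $\nu(\{x\})>1/2$ gives $\overline{\nu}(\{x\})=-\nu(\{x\})<-1/2$) verifies this round-trip and simultaneously supplies the converse in (i) as a corollary of part (ii).

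Parts (ii), (iii) and (iv) are entirely parallel. For (ii) I begin from a left local time solution, split on $\{\overline{\nu}>-1/2\}$, $\{\overline{\nu}=-1/2\}$, $\{\overline{\nu}<-1/2\}$ and use the analogue of Lemma \ref{lemma:loc_time_zero} for left local time together with the linear identity to express $L_-^X$ through $L_+^X$; the density of $\nu$ with respect to $\overline{\nu}$ then reads off directly. For (iii) and (iv) I exploit $\hat{L}^X=(L_+^X+L_-^X)/2$ together with the linear identities to pass between $L_-^X$ and $\hat{L}^X$, which yields the coefficients $1+2\overline{\nu}(\{y\})$ and $1-\hat{\nu}(\{y\})$ appearing as $2+2\overline{\nu}(\{y\})$ and $1-\hat{\nu}(\{y\})$ after combination, accounting for the factor $1/2$ and for the exclusion conditions $\overline{\nu}(\{x\})\neq -1$ and $\hat{\nu}(\{x\})\neq 1$. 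As in Proposition \ref{prop:equiv_mvd_right_and_sym_loc_time}, the converse in each case is obtained by verifying that the prescribed density keeps us out of the excluded set and then invoking the forward direction of the partner formula.

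The main obstacle is not analytic but combinatorial: one has to keep the three linear coefficients $(1-2\nu)$, $(1+2\overline{\nu})$ and $(1\pm\hat{\nu})$ straight across the four parts, and to confirm in each case that the ``exceptional'' values of the density (the $-1$ on $\{\nu>1/2\}$, the $-1$ on $\{\overline{\nu}<-1/2\}$ and the $-1/2$ on $\{\hat{\nu}>1\}$) are precisely what is needed for the round-trip between the two formulations to close up. Once this bookkeeping is in place, no new stochastic-analytic ingredient beyond those already exhibited in the proof of Lemma \ref{lemma:loc_time_zero} and Proposition \ref{prop:equiv_mvd_right_and_sym_loc_time} is required.
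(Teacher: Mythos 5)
Your overall strategy is the right one and is what the paper intends: no proof is given for this proposition beyond the remark that it follows by the same arguments as Proposition \ref{prop:equiv_mvd_right_and_sym_loc_time}, and your template --- the pointwise linear relations between $L_+^X$ and $L_-^X$ coming from (\ref{eqn:int_wrt_loc_time}) and (\ref{eqn:loc_time_and_variation_process}), the vanishing of local times on the exceptional atom sets via Lemma \ref{lemma:loc_time_zero}, and the converse directions obtained from the forward direction of the partner formula --- does establish parts (i), (iii) and (iv). One caveat: the ``round trip'' does not always literally return the original drift measure. For instance, starting in (iii) from $\overline{\nu}$ with $\overline{\nu}(\{x\})\le -1$ one gets $\hat{\nu}(\{x\})=2\overline{\nu}(\{x\})\le -2$, and the formula of (iv) then returns $2\overline{\nu}(\{x\})/(1-2\overline{\nu}(\{x\}))\neq\overline{\nu}(\{x\})$. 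The converse still goes through, but only after the additional observation that the returned measure differs from the original one only on a set on which Lemma \ref{lemma:loc_time_zero} forces the relevant local time to vanish identically; this step should be made explicit rather than subsumed under ``the round trip closes up''.

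More importantly, in part (ii) you take the density of $\nu$ with respect to $\overline{\nu}$ on $\{\overline{\nu}<-1/2\}$ to be $-1$, whereas the printed statement prescribes $+1$. Your value is the correct one, and the statement as printed is actually false with $+1$: take $\overline{\nu}=-\delta_0$, $b\equiv 1$, $X_0=0$. The formula with $+1$ gives $\nu=-\delta_0$, which satisfies (\ref{eqn:cond_atoms}) for the right local time, so the equation $X_t=B_t-L_+^X(t,0)$ has a nontrivial solution, and any such solution must have $L_+^X(t,0)>0$ for large $t$ (otherwise $X$ would be a Brownian motion with vanishing local time at $0$). By Lemma \ref{lemma:loc_time_zero}, however, no solution of the left-local-time equation with drift measure $-\delta_0$ can have positive local time at $0$, so this $X$ is not such a solution and the ``if'' direction fails. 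The underlying rule, which your proof uses implicitly, is that on the set where the source formulation forces both one-sided local times to vanish, the density must send the atom into the set where the target formulation does the same; for the right local time that set is $\{\nu>1/2\}$, so on $\{\overline{\nu}<-1/2\}$ one needs $\nu=-\overline{\nu}>1/2$, i.e.\ density $-1$, mirroring the minus sign in Proposition \ref{prop:equiv_mvd_right_and_sym_loc_time}(ii). You should state this correction explicitly instead of leaving a silent mismatch between your densities and the proposition as written.
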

\bibliographystyle{plain}
\bibliography{references}
\end{document}